\newtheorem{theorem}{Theorem}[section]
\newtheorem{lemma}[theorem]{Lemma}
\newtheorem{definition}[theorem]{Definition}
\newtheorem{proposition}[theorem]{Proposition}
\newtheorem{remark}[theorem]{Remark}
\newtheorem{step}{Step}
\newcommand\N{\mathbb N}
\newcommand\R{\mathbb R}
\newcommand\Z{\mathbb Z}
\newcommand{\Ha}{\mathcal H}
\newcommand{\cF}{\mathcal F}
\newcommand{\cB}{\mathcal B}
\newcommand{\cA}{\mathcal A}
\newcommand{\cM}{\mathcal M}
\newcommand{\bC}{{\bf C}}
\newcommand{\cN}{{\mathcal N}}
\newcommand{\E}{\mathbb E}
\newcommand{\rd}{\R^d}
\newcommand{\ep}{\varepsilon}
\newcommand{\loc}{\operatorname{loc}}
\newcommand{\sph}{S^{d-1}}
\newcommand{\conv}{\operatorname{conv}}
\newcommand{\BV}{\operatorname{BV}}
\newcommand{\FP}{\operatorname{FP}}
\newcommand{\Div}{\operatorname{div}}
\newcommand{\1}{{\bf 1}}
\newcommand{\per}{P}
\newcommand{\llc}{\;\halfsq\;}
\def\halfsq{\hbox{\kern1pt\vrule height 7pt\vrule width6pt height 0.4pt depth0pt\kern1pt}}
\def\ihalfsq{\hbox{\kern1pt \vrule width6pt height 0.4pt depth0pt
                   \vrule height 7pt \kern1pt}}
\begin{document}
\title{Random sets of finite perimeter}
\author{Jan Rataj}
\address{Charles University, Faculty of Mathematics and Physics, Sokolovsk\'a 83, 18675 Praha 8, Czech Republic}
\email{rataj@karlin.mff.cuni.cz}
\thanks{Supported by grant GA\v CR 201/10/0472}

\begin{abstract}
An approach to modelling random sets with locally finite perimeter as random elements in the corresponding subspace of $L^1$ functions is suggested. A Crofton formula for flat sections of the perimeter is shown. Finally, random processes of particles with finite perimeter are introduced and it is shown that their union sets are random sets with locally finite perimeter. 
\end{abstract}

\keywords{}
\subjclass[2000]{60D05; 60B99}

\maketitle

\section{Introduction}
In stochastic geometry, random sets in a Euclidean space are standardly considered as random closed sets, which is a concept introduced by Matheron \cite{Ma75}. For later surveys, see \cite{SKM95}, \cite{Mo05}. A random closed set is a random element from the family $\cF$ of all closed subsets of the Euclidean space $\rd$ equipped with the Fell topology, whose basis is generated by the sets $\cF_G:=\{F\in\cF:\, F\cap G\neq\emptyset\}$, $G\subset\rd$ open, and $\cF^K:=\{F\in\cF:\, F\cap K=\emptyset\}$, $K\subset\rd$ compact. For statistical inference, various test sets are used to observe whether the random closed set hits or misses them. The model of a random closed set is used for rather different phenomena as point patterns, unions of segments, lines, curves, surfaces, or ``full-dimensional'' sets with boundaries satisfying some regularity conditions. Of course, the type of the test set must reflect the nature of the random closed set which is analyzed.

Full-dimensional random objects with regular boundary are usually modelled as sets from the extended convex ring (or, more generally, unions of sets with positive reach). Nevertheless, if only first order geometric quantities of the boundary as surface area are measured, the model assumption is too restrictive. It has already been observed that the framework of sets of finite perimeter due to Caccioppoli is probably the most natural and general one for such a purpose. In particular, Ambrosio et al. \cite{ACV08} considered the outer Minkowski content of such (random) sets, or Galerne \cite{Ga11} extended some properties of the covariogram to this setting. As far as we know, however, up to now, the Matheron's concept of a random closed set was considered, with certain additional assumptions.

The aim of this note is to suggest another concept of a random set, namely a random set of finite perimeter, which should serve as a sufficiently general and suitable model whenever quantities derived from the surface area are considered. The first substantial difference from the Matheron's approach is that sets of finite perimeter (represented by their indicator function) are considered as elements of the Lebesgue space $L^1$. Therefore, we do not distinguish two sets whose symmetric difference has Lebesgue measure zero. This is not unreasonable if we admit that random sets are usually observed in a lattice pixel approximation on the screen. Also, viewing random sets as their indicator functions belonging to a larger space of functions of bounded variation makes it possible to use approximations with continuous or even smooth functions and apply new techniques not available for point sets. Of course, when dealing with point patterns or lower-dimensional objects, different models should be applied.

In order to deal with random sets of (locally) finite perimeter we have to equip the family of sets of (locally) finite perimeter with a topology determining measurability and convergence. We suggest to consider the strict convergence which assures  both convergence in $L^1$ and convergence of perimeter. Roughly speaking, the convergence in this topology of sets guaranties the convergence of both volume and surface area, in contrast to the Fell topology.

As one particular example of classical properties which can be transformed to our setting, we prove the Crofton formula for perimeter in Section~3 and apply it to random sets in Theorem~\ref{T_Croft}. 

In the last two sections we introduce the two basic stochastic models, random sets with (locally) finite perimeter and random $\FP$-processes (as processes of particles with finite perimeter). We show some basic properties of convergence in distribution of random sets with locally finite perimeter. Our last result is that the union set of a random $\FP$-process is a random set with locally finite perimeter.

\section{Sets of finite perimeter}
Our basic setting is the $d$-dimensional Euclidean space $\rd$ with norm $|\cdot|$. Given $0\leq k\leq d$, $\Ha^k$ denotes the $k$-dimensional Hausdorff measure in $\rd$. In particular, $\Ha^d$ agrees with the $d$-dimensional Lebesgue measure.

In this section, we summarize the necessary definitions and properties of sets with finite perimeter. We refer to \cite{AFP00} or \cite{Zi89}.

The {\it distributional derivative} of a function $f\in L^1\equiv L^1(\rd)$ is the functional 
$$Df=(D_1f,\ldots,D_df):C^1_c(\rd)\to\rd$$
defined by
$$D_if (\phi):=-\int f\cdot\frac{\partial \phi}{\partial x_i}\, d\Ha^d,\quad i=1,\ldots, d$$
($C^1_c(\rd)$ denotes the space of $C^1$-smooth functions on $\rd$ with compact support).
If $Df$ can be represented as a (locally) finite Radon vector-valued measure, we say that $f$ has ({\it locally}) {\it bounded variation}. Its variation measure
$$|Df|(\cdot)=\sup\{ Df(E_1)+\cdots +Df(E_n):\, E=E_1\cup\cdots\cup E_n\text{ measurable finite partition}\}$$
is then a nonnegative (locally) finite Radon measure, and its total variation
$$Vf:=|Df|(\rd)=\sup\left\{ \int f\cdot \Div \varphi\,d\Ha^d:\, \varphi\in C^1_c(\rd,\rd), \sup_x|\varphi(x)|\leq 1\right\}.$$
is called the {\it variation} of $f$. (Here $\Div\varphi (x)=\sum_{i=1}^d\frac{\partial\varphi^i}{\partial x_i}(x)$ is the divergence of a mapping $\varphi=(\varphi^1,\ldots,\varphi^d):\rd\to\rd$.)

The vector-valued measure $Df$ is clearly absolutely continuous with respect to its total variation $|Df|$ and if $\Delta_f\in L^1(|Df|)$ is its Radon-Nikodym density, it satisfies $\Delta_f(x)\in\sph$ for $|Df|$-almost all $x$.

A Lebesgue measurable set $A\subset\rd$ is said to have ({\it locally}) {\it finite perimeter} if its indicator function $\1_A$ has (locally) finite variation. The {\it perimeter} of $A$ is defined as
$$\per (A):=V\1_A=|D\1_A|(\rd).$$
Let $A\subset\rd$ have locally finite perimeter. The {\it reduced boundary} of $A$, $\partial^-A$, consists of all points $x\in\rd$ such that the limit 
$$\nu_A(x):=-\lim_{r\to 0}\frac{D\1_A(B(x,r))}{|D\1_A|(B(x,r))}$$
exists and satisfies $|\nu_A(x)|=1$. (We assume implicitly here that $|D\1_A|(B(x,r))>0$ for all $r>0$.)
$\nu_A(x)$ is called the {\it generalized exterior normal} to $A$ at $x$. In fact, $\nu_A$ is a version of the density $\Delta_{\1_A}$ of $D\1_A$ with respect to $|D\1_A|$, see \cite[\S5.5]{Zi89}.
If $A$ has locally finite perimeter then $\partial^-A$ is countably $(d-1)$-rectifiable and 
$$|D\1_A|(B)=\Ha^{d-1}(B\cap \partial^-A),\quad B\in\cB^d;$$
in particular,
$$\per (A)=\Ha^{d-1}(\partial^-A).$$

The {\it measure-theoretical boundary} $\partial^MA$ of a set $A\subset\rd$ is defined as the set of all points $x\in\rd$ at which the Lebesgue density of $A$ in neither $0$ nor $1$. If $A$ has finite perimeter then $\partial^-A\subset\partial^MA$, $\Ha^{d-1}(\partial^MA\setminus\partial^-A)=0$ and, hence,
$$\per (A)=\Ha^{d-1}(\partial^M A).$$

Let us call the set
$$N(A):=\{ (x,\nu_A(x)):\, x\in\partial^-A\}\subset \rd\times\sph$$
{\it unit normal bundle} of $A$. (Notice that $N(A)$ is neither closed, nor countably $(d-1)$-rectifiable, in general, in contrast to the unit normal bundle defined in the classical setting for convex bodies or sets with positive reach.) 
We consider the measure
$$\bC_{d-1}(A,\cdot):=(\Ha^{d-1}\llc \partial^-A) \psi^{-1},$$
where the Borel measurable mapping $\psi: x\mapsto (x,\nu_A(x))$ is defined on $\partial^-A$. $\bC_{d-1}(A,\cdot)$ is a locally finite Borel measure on $\rd\times\sph$ and it is finite iff $A$ has finite perimeter.
If $A$ has finite perimeter we define the {\it normal measure} of $A$
$$S_{d-1}(A,\cdot):= \bC_{d-1}(A,\rd\times\cdot);$$
it is a finite Borel measure on $\sph$. Clearly,
$$\bC_{d-1}(A,\rd\times\sph)=S_{d-1}(A,\sph)=\per(A).$$

\section{Flat sections of FP-sets and a Crofton formula}
Let $L\in G(d,j)$ be a fixed $j$-subspace, and let $p_L$ be the orthogonal projection from $\rd$ onto $L$. Given a function $f\in L^1$, we denote by
$$D_Lf(\cdot):=p_L(Df(\cdot))$$
the {\it distributional derivative of $f$ with respect to the subspace} $L$. (Note that $D_Lf$ takes into account only directional derivatives in directions from $L$.)

Immediately from the definition we see that if $f$ has bounded variation then $D_Lf$ is a finite Radon measure and its total variation is related to that of $Df$ by
\begin{equation}   \label{E11}
|D_Lf|(B)=\int_B\| p_L\Delta_f(x)\|\, |Df|(x),
\end{equation}
where $\Delta_f$ is the Radon-Nikodym derivative of the vector measure $Df$ with respect to its total variation $|Df|$.

If $E$ be a $j$-flat in $\rd$ and $f\in L^1(E)$, then we denote by $D^{(E)}f$ the distributional derivative of $f$ in $E$. This vector measure depends on the chosen orientation of $E$, whereas its total variation $|D^{(E)}f|$ does not.

\begin{lemma}  \label{L11}
If $L\in G(d,j)$, $f$ has bounded variation and $B$ is a Borel subset of $\rd$ then
$$|D_Lf|(B)=\int_{L^\perp}|D^{(L+z)}(f|L+z)|(B\cap(L+z))\, \Ha^{d-j}(dz).$$
\end{lemma}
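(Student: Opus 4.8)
The plan is to reduce the statement to a one-dimensional slicing/disintegration fact for BV functions, applied coordinate-wise in the directions of $L$, and to match it against the representation formula \eqref{E11}. First I would choose an orthonormal basis $e_1,\dots,e_j$ of $L$ and complement it to an orthonormal basis $e_1,\dots,e_d$ of $\rd$, so that $L^\perp=\operatorname{span}(e_{j+1},\dots,e_d)$ and $p_L$ is the projection onto the first $j$ coordinates. In these coordinates write a point as $(y,z)$ with $y\in L$, $z\in L^\perp$; the fibers $L+z$ are exactly the affine $j$-flats parallel to $L$, and $\Ha^{d-j}(dz)$ is Lebesgue measure on $L^\perp$. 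The quantity $|D_Lf|$ is, by definition $D_Lf=p_L(Df)=(D_1f,\dots,D_jf)$, the total variation of the $\R^j$-valued measure whose components are the first $j$ partial distributional derivatives of $f$.

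The key step is the classical characterization of BV functions by slicing (see \cite{AFP00}): if $f\in\BV(\rd)$, then for $\Ha^{d-j}$-a.e.\ $z\in L^\perp$ the restriction $f|_{L+z}$ is in $\BV(L+z)$, the vector measure $D^{(L+z)}(f|_{L+z})$ is obtained from $D_Lf$ by disintegration along the fibration $(y,z)\mapsto z$, and one has the integral-geometric identity
\begin{equation*}
|D_Lf|(B)=\int_{L^\perp}|D^{(L+z)}(f|_{L+z})|\bigl(B\cap(L+z)\bigr)\,\Ha^{d-j}(dz)
\end{equation*}
for every Borel $B\subset\rd$. Concretely I would first establish this in the model case $j=1$ (slicing in a single direction $e_1$), where it is the statement that $\partial_1 f$, as a measure, disintegrates into the one-variable derivatives of the slices $t\mapsto f(t,z)$; the general $j$ then follows because $D_Lf$ is the $\R^j$-valued measure $(D_1f,\dots,D_jf)$ and $D^{(L+z)}(f|_{L+z})$ is the corresponding $\R^j$-valued measure on the fiber, so both sides are the total variations of measures that agree under disintegration. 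One clean route is to verify equality of the two sides when tested against $\psi\in C_c(\rd)$: for the left side use \eqref{E11} together with Fubini on the rectifiable set where $\Delta_f$ lives; for the right side use the coarea/slicing formula for the $(d-1)$-rectifiable reduced boundary (or, for general $f$, approximate $f$ by mollification, for which the identity is the elementary Fubini theorem, and pass to the limit using lower semicontinuity and the strict convergence $Vf_n\to Vf$).

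The main obstacle I anticipate is the passage to the limit in the approximation argument: lower semicontinuity of total variation gives only one inequality ($\le$ on the left in terms of the right, fiberwise), and one must upgrade it to an equality. The standard fix is to prove the reverse inequality by a measure-theoretic disintegration argument — the fiber measures $|D^{(L+z)}(f|_{L+z})|$ are, by the slicing theorem, dominated by the disintegration of $|D_Lf|$ and integrate back to it — so no mass is lost. A secondary technical point is to handle the set of ``bad'' fibers where $f|_{L+z}\notin L^1(L+z)$ or is not BV: by Fubini this set is $\Ha^{d-j}$-null and contributes nothing to either side, so it may be discarded at the outset. Once these two points are dispatched, the identity follows by combining \eqref{E11} with the fiberwise slicing formula and Fubini, and the lemma is proved.
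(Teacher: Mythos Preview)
Your proposal is correct in substance, but it takes a considerably heavier route than the paper does. You invoke the full slicing theory of \cite{AFP00}, consider a reduction to $j=1$, and then discuss a mollification/lower-semicontinuity argument together with the difficulty of recovering the reverse inequality. None of this is needed.

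The paper's proof works directly at the level of the \emph{vector} measures, not their total variations, and uses nothing beyond the distributional definition and Fubini. For $\phi\in C^1_c(\rd)$ one observes that $\nabla(\phi|_{L+z})=p_L\nabla\phi$ on $L+z$, so
\[
D^{(L+z)}(f|_{L+z})(\phi|_{L+z})=-\int_{L+z} f\cdot p_L\nabla\phi\,,
\]
and integrating over $z\in L^\perp$ with Fubini (applied to the integrable function $f\cdot p_L\nabla\phi$) gives
\[
\int_{L^\perp}D^{(L+z)}(f|_{L+z})(\phi|_{L+z})\,\Ha^{d-j}(dz)=-\int_{\rd} f\cdot p_L\nabla\phi\,dx=D_Lf(\phi).
\]
Thus the vector measure $D_Lf$ equals the fiberwise integral of the $D^{(L+z)}(f|_{L+z})$, and the total-variation identity follows immediately (this last step is exactly your remark that ``both sides are the total variations of measures that agree under disintegration''). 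No approximation, no Reshetnyak-type continuity, and no separate handling of bad fibers is required: the Fubini step is performed on the ordinary integral $\int f\,\partial_i\phi$, not on any limiting object. What your longer argument buys is a self-contained reproof of the slicing theorem; what the paper's argument buys is a two-line derivation once one tests the vector measure rather than its modulus.
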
 

\begin{proof} Note that if $\phi\in C^1$ then the restriction $\phi|L+z\in C^1(L+z)$ and the gradient satisfies
$$\nabla(\phi|L+z)(x)=p_L\nabla\phi(x),\quad x\in L+z.$$
Thus,
\begin{eqnarray*}
D^{(L+z)}(f|L+z)(\phi|L+z)&=&-\int_{L+z} f\cdot\nabla(\phi|L+z)\, dx\\
&=&-\int_{L+z} f\cdot p_L\nabla\phi\, dx.
\end{eqnarray*}
Hence, integrating over $z\in L^\perp$, we get from the Fubini theorem
$$\int_{L^\perp}D^{(L+z)}(f|L+z)(\phi|L+z)\, \Ha^{d-j}(dz)=\int_{\rd} f\cdot p_L\nabla\phi\, dx=D_Lf(\phi).$$
The assertion follows.
\end{proof}

\begin{theorem}[Crofton formula for perimeter]  \label{T_Crof}
If $f$ has bounded variation and $B\subset\rd$ Borel then
$$\int_{\cA(d,j)}|D^{(E)}(f|E)|(B\cap E)\, \mu_j^d(dE)=c_{d,j}|Df|(B),$$
where
$$c_{d,j}=\frac{\Gamma(\tfrac{2d-j}2)\Gamma(\tfrac{j+1}2)}{\Gamma(\tfrac{d+1}2)\Gamma(\tfrac{d}{2})}.$$
In particular, if $A\in\FP$ then
$$\int_{\cA(d,j)}\per(A\cap E)\, \mu_j^d(dE)=c_{d,j}\per(A).$$
\end{theorem}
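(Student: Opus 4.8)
The plan is to combine Lemma~\ref{L11} with the integral-geometric disintegration of the motion-invariant measure $\mu_j^d$ on the affine Grassmannian $\cA(d,j)$, and then average the factor $\|p_L\Delta_f(x)\|$ appearing in \eqref{E11} over all directions $L\in G(d,j)$.

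First I would fix notation for the invariant measures: $\mu_j^d$ decomposes (up to the usual normalising constants) as an integral over $L\in G(d,j)$ with respect to the invariant probability measure $\nu_j$ on the Grassmannian, and, for each such $L$, an integral over the translates $L+z$, $z\in L^\perp$, with respect to $\Ha^{d-j}$. Applying this disintegration to the left-hand side and then invoking Lemma~\ref{L11} for each fixed $L$, the inner double integral over $z\in L^\perp$ and then $B\cap(L+z)$ collapses to $|D_Lf|(B)$. Thus the left-hand side becomes $c\int_{G(d,j)}|D_Lf|(B)\,\nu_j(dL)$ for an explicit constant $c$ coming from the normalisation of $\mu_j^d$. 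Now I substitute the pointwise formula \eqref{E11}, interchange the order of integration (Tonelli, everything being nonnegative), and arrive at
\begin{equation*}
\int_{G(d,j)}|D_Lf|(B)\,\nu_j(dL)=\int_B\left(\int_{G(d,j)}\|p_L\Delta_f(x)\|\,\nu_j(dL)\right)|Df|(x).
\end{equation*}
Since $|\Delta_f(x)|=1$ for $|Df|$-a.e.\ $x$, the inner integral $\int_{G(d,j)}\|p_L u\|\,\nu_j(dL)$ is a constant independent of the unit vector $u$ by rotation invariance of $\nu_j$; call it $\gamma_{d,j}$. Hence the left-hand side equals $\gamma_{d,j}\,|Df|(B)$, and $c_{d,j}=c\cdot\gamma_{d,j}$.

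The only genuine computation is the evaluation of $\gamma_{d,j}=\int_{G(d,j)}\|p_L u\|\,\nu_j(dL)$ for a fixed unit vector $u$ (equivalently, the mean length of the projection of a fixed unit vector onto a uniform random $j$-subspace) together with the normalising constant in the disintegration of $\mu_j^d$; both are classical integral-geometric constants expressible via Gamma functions, and matching the product to the stated $c_{d,j}$ is a bookkeeping exercise with Beta integrals. For the second assertion one simply takes $f=\1_A$ with $A\in\FP$: then $|Df|(B)=\per(A\cap B')$-type quantities specialise, and with $B=\rd$ one has $|D^{(E)}(\1_A|E)|(E)=|D^{(E)}(\1_{A\cap E})|(E)=\per(A\cap E)$ computed within the $j$-flat $E$, while $|Df|(\rd)=\per(A)$; this needs the elementary remark that $\1_A|E=\1_{A\cap E}$ and that $A\cap E$ has finite perimeter in $E$ for $\mu_j^d$-a.e.\ $E$ (which follows because the left-hand integrand is integrable).

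The main obstacle I anticipate is purely at the level of constants and normalisations: one must be careful about which normalisation of $\mu_j^d$ is in force (several conventions circulate in the literature), so that the product of the disintegration constant and $\gamma_{d,j}$ reproduces exactly $c_{d,j}=\Gamma(\tfrac{2d-j}{2})\Gamma(\tfrac{j+1}{2})/(\Gamma(\tfrac{d+1}{2})\Gamma(\tfrac{d}{2}))$. A secondary technical point is the measurability of $E\mapsto |D^{(E)}(f|E)|(B\cap E)$ on $\cA(d,j)$, needed to make the left-hand side meaningful; this follows from Lemma~\ref{L11} once one checks measurability in the fibre variable $z$, which is routine from the Fubini argument already used in the proof of that lemma.
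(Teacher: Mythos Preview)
Your proposal is correct and follows essentially the same route as the paper: disintegrate $\mu_j^d$ over $G(d,j)$, apply Lemma~\ref{L11} fibrewise, substitute \eqref{E11}, and use rotation invariance to reduce the inner Grassmannian integral to a constant. You are in fact more careful than the paper, which compresses the whole argument into two displayed lines and leaves the disintegration, the Tonelli step, the measurability of the integrand, and the specialisation to $f=\1_A$ entirely implicit.
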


\begin{proof}
Integrating the equality from Lemma~\ref{L11} over $L\in G(d,j)$, we get using \eqref{E11}
$$\int_{\cA(d,j)}|D^{(E)}(f|E)|(B\cap E)\, \mu_j^d(dE)=\int_B\int_{G(d,j)}\| p_L(\Delta_f(x))\|\, \nu_j^d(dL)\, dx.$$
The inner integral does not depend on the vector $\Delta_f(x)$; a routine calculation verifies that
$$\int_{G(d,j)}\| p_L(\Delta_f(x))\|\, \nu_j^d(dL)=c_{d,j}^{-1},$$
and the assertion follows.
\end{proof}

\section{Spaces $\BV$ and $\FP$}
Let $\BV$ denote the set of all functions $f\in L^1$ with bounded variation, and $\BV_{\loc}$ the set of all functions $f\in L^1_{\loc}$ with locally bounded variation.
Further, we denote by $\FP\subset\BV$ the subfamily of sets with finite perimeter, and by $\FP_{\loc}\subset\BV_{\loc}$ the subfamily of sets with locally finite perimeter.

We equip $\BV$ with the {it topology of strict convergence} defined as follows. If $f_i,f\in\BV$ then $f_i\stackrel{s}{\to}f$ iff $f_i\to f$ in $L^1(\Omega)$ and $Vf_i\to Vf$.
The strict convergence implies weak convergence $Df\stackrel{w}{\to} Df$, but not vice versa (see \cite[\S3.1]{AFP00}). Strict convergence is induced by the metric
\begin{equation}  \label{sc}
d_s(f,g)=\int |f-g|\, dx+|Vf-Vg|,\quad f,g\in\BV .
\end{equation}
Strict convergence implies weak convergence not only of the distributional derivatives, but even of their variation measures, i.e.,
\begin{equation}  \label{sw}
f_i\stackrel{s}{\to}f\implies |Df_i|\stackrel{w}{\to}|Df|,
\end{equation}
see \cite[Proposition~3.15]{AFP00}.

Analogously, we equip $\BV_{\loc}$ with the {\it locally strict convergence} given by $f_i\stackrel{ls}{\to}f$ iff $f_i\to f$ in $L^1_{\loc}(\Omega)$ and the measures $|Df_i|$ converge to $|Df|$ vaguely (we write $|Df_i|\stackrel{v}{\to}|Df|$). In the sequel, whenever speaking about the space $\BV$ ($\BV_{\loc}$) or its subspace $\FP$ ($\FP_{\loc}$), we will mean the topology induced by strict (locally strict) convergence.

\begin{remark} \rm
The metric $d_s$ on $\BV$ is not complete, as can be seen from the following example: the functions $f_i(x)=\sin(ix)/i$, $x\in (0,2\pi)$, and $f_i(x)=0$ otherwise, converge to $0$ in $L^1$ and their variations are constant and nonzero: $Vf_i=4$ for all $i\in\N$. Thus, the sequence $(f_i)$ is Cauchy in $d_s$ and its limit cannot be nothing else than the zero function, but the variations would not converge. On the other hand, the space $\BV$ with the Borel $\sigma$-algebra $\cB(\BV)$ of the metric $d_s$ is a {\it standard Borel space} in the sense that there exists another metric $\rho$ on $\BV$ having the same Borel sets as $d_s$. This can be seen as follows: Since the variation $f\mapsto Vf$ is lower semicontinuous in $L^1$ (\cite[Remark~3.5]{AFP00}), the unit $d_s$-ball is a $F_\sigma$-set in $L^1$ and, hence, the Borel sets in $(\BV,d_s)$ agree with the Borel sets of $L^1$ intersected with $\BV$. Thus, a result of descriptive set theory \cite[Corollary~13.4]{Ke95} implies that $(\BV,\cB(\BV))$ is a standard Borel space.
\end{remark}

Let $\cM$ be the space of locally bounded Borel measures on $\rd$ with topology of vague convergence, and let $\cM_b$ be its subspace of bounded Borel measures on $\rd$ (its induced topology coincides with that of weak convergence).

\begin{proposition}  \label{C_cont}
The assignment
$$A\mapsto \bC_{d-1}(A,\cdot)$$
defines a continuous mapping from $\FP$ to $\cM_b$ and from $\FP_{\loc}$ to $\cM$.
\end{proposition}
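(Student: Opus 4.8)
The plan is to reduce the statement to Reshetnyak's continuity theorem \cite[Theorem~2.39]{AFP00}, applied to the $\rd$-valued measures $D\1_{A_i}$. Recall that for $A\in\FP_{\loc}$ one has $|D\1_A|=\Ha^{d-1}\llc\partial^-A$, and that $\nu_A$ agrees $|D\1_A|$-almost everywhere with the Radon--Nikodym direction $\Delta_{\1_A}$ of $D\1_A$ with respect to $|D\1_A|$, up to sign, with $|\Delta_{\1_A}|=1$ $|D\1_A|$-a.e. Hence, directly from the definition of $\bC_{d-1}(A,\cdot)$ as an image measure,
\begin{equation*}
\int_{\rd\times\sph} g\, d\bC_{d-1}(A,\cdot)=\int_{\partial^-A}g(x,\nu_A(x))\,\Ha^{d-1}(dx)=\int_\rd g(x,\nu_A(x))\,|D\1_A|(dx)
\end{equation*}
for every bounded Borel $g$ on $\rd\times\sph$. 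Since $u\mapsto-u$ is a homeomorphism of $\sph$, continuity of $A\mapsto\bC_{d-1}(A,\cdot)$ into $\cM_b$ (resp.\ $\cM$) is therefore equivalent to the convergence
\[
\int_\rd h(x,\Delta_{\1_{A_i}}(x))\,|D\1_{A_i}|(dx)\longrightarrow\int_\rd h(x,\Delta_{\1_A}(x))\,|D\1_A|(dx)
\]
for every $h\in C_b(\rd\times\sph)$ (resp.\ every $h\in C_c(\rd\times\sph)$), which is precisely the form of the conclusion of Reshetnyak's theorem; it remains to check its hypotheses.

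For $A_i\stackrel{s}{\to}A$ in $\FP$: strict convergence implies weak convergence of the distributional derivatives, $D\1_{A_i}\stackrel{w}{\to}D\1_A$ (cf.\ \cite[\S3.1]{AFP00}), while $|D\1_{A_i}|(\rd)=\per(A_i)=V\1_{A_i}\to V\1_A=\per(A)=|D\1_A|(\rd)$. As $A_i,A\in\FP$, all of these are finite $\rd$-valued Radon measures on $\rd$, so \cite[Theorem~2.39]{AFP00} applies on $\Omega=\rd$ and yields the displayed convergence for all $h\in C_b(\rd\times\sph)$. Thus $\bC_{d-1}(A_i,\cdot)\to\bC_{d-1}(A,\cdot)$ weakly, i.e.\ in $\cM_b$.

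For $A_i\stackrel{ls}{\to}A$ in $\FP_{\loc}$: locally strict convergence entails $D\1_{A_i}\stackrel{v}{\to}D\1_A$ (integration by parts against $C^1_c$ plus a uniform local mass bound) as well as $|D\1_{A_i}|\stackrel{v}{\to}|D\1_A|$. Fix $h\in C_c(\rd\times\sph)$ with $\spt h\subset K\times\sph$, $K$ compact, and choose an open ball $B\supset K$ with $|D\1_A|(\partial B)=0$ (possible since $|D\1_A|$ is locally finite, so only countably many radii are excluded). A Portmanteau-type argument --- using a Urysohn function $\varphi$ with $\1_{\bar B}\le\varphi\le\1_U$ for a slightly larger open ball $U$, so that $\limsup_i|D\1_{A_i}|(\bar B)\le|D\1_A|(\bar B)=|D\1_A|(B)$ --- shows $|D\1_{A_i}|(B)\to|D\1_A|(B)$, and likewise $D\1_{A_i}\llc B\to D\1_A\llc B$ weakly-$*$ as finite $\rd$-valued measures on $B$. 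Now \cite[Theorem~2.39]{AFP00} on $\Omega=B$ applies, and since $h(x,\Delta_{\1_{A_i}}(x))$ vanishes for $x\notin B$, restricting the $x$-integration to $B$ is immaterial; this gives the displayed convergence, hence $\bC_{d-1}(A_i,\cdot)\to\bC_{d-1}(A,\cdot)$ in $\cM$.

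The heart of the matter is the invocation of Reshetnyak's theorem: weak (vague) convergence of the gradient measures $D\1_{A_i}$ by itself does not control their Gauss-map image measures, and it is precisely the additional convergence of the total variation measures --- $\per(A_i)\to\per(A)$ for $\FP$, and $|D\1_{A_i}|\stackrel{v}{\to}|D\1_A|$ for $\FP_{\loc}$ --- built into (locally) strict convergence, together with the fact that $|\Delta_{\1_A}|=1$ $|D\1_A|$-a.e., that supplies the missing rigidity. Everything else is bookkeeping; the only extra step in the $\FP_{\loc}$ case is the routine localization to a ball with $|D\1_A|$-null boundary, which returns us to the finite-mass setting in which Reshetnyak's theorem applies verbatim.
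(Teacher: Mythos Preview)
Your proof is correct and follows the same route as the paper, which simply invokes \cite[Proposition~3.15]{AFP00} for the $\FP$ case and declares the $\FP_{\loc}$ case analogous. Your citation of Reshetnyak's continuity theorem \cite[Theorem~2.39]{AFP00} is in fact the more precise reference, since Proposition~3.15 literally only gives $|D\1_{A_i}|\stackrel{w}{\to}|D\1_A|$ on $\rd$ rather than the full image-measure convergence on $\rd\times\sph$, and your explicit localization to a ball with $|D\1_A|$-null boundary spells out what the paper leaves implicit.
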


\begin{proof}
Assume that $A_i,A\in\FP$ and that $A_i\stackrel{s}{\to} A$, $i\to\infty$. Then, using \cite[Proposition~3.15]{AFP00}, we get that $\bC_{d-1}(A_i,\cdot)\stackrel{w}{\to}\bC_{d-1}(A,\cdot)$, $i\to\infty$, proving the first statement. The second statement follows analogously.
\end{proof}

An important fact for applications in stochastic geometry is that the spaces $\FP$ and $\FP_{\loc}$ are closed with respect to finite unions and intersections (unless, e.g., sets with piecewise smooth boundaries).

\begin{proposition}  \label{P-union}
If $A,B\in\FP_{\loc}$ then both $A\cup B$, $A\cap B\in\FP_{\loc}$ and
$$|D\1_{A\cup B}|(\cdot)+|D\1_{A\cap B}|(\cdot)\leq |D\1_{A}|(\cdot)+|D\1_{B}|(\cdot).$$
Consequently, if $A,B\in\FP$ then $A\cup B,A\cap B\in\FP$ and
\begin{equation}  \label{P1-1}
\per(A\cup B)+\per (A\cap B)\leq \per (A)+\per (B).
\end{equation}
Further, if $(A_i)$ is a finite or countable family of sets of finite perimeter then
\begin{equation}  \label{P1-2}
|D\1_{\bigcup_iA_i}|(\cdot)\leq\sum_i|D\1_{A_i}|(\cdot).
\end{equation}
\end{proposition}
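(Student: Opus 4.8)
The plan is to reduce everything to the two-set case and to derive the two-set inequality from the coarea formula for $BV$ functions applied to $u:=\1_A+\1_B$. First I would note that if $A,B\in\FP_{\loc}$ then $u\in\BV_{\loc}$: the function $u$ is bounded, hence in $L^1_{\loc}$, and $Du=D\1_A+D\1_B$ is a locally finite $\rd$-valued measure. Moreover, as an identity of sets, $\{u\ge1\}=A\cup B$ and $\{u\ge2\}=A\cap B$, so the superlevel sets of $u$ are $\{u>t\}=\rd$ for $t<0$, $\{u>t\}=A\cup B$ for $t\in[0,1)$, $\{u>t\}=A\cap B$ for $t\in[1,2)$, and $\{u>t\}=\emptyset$ for $t\ge2$; the first and last carry no perimeter.

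The key step is then the coarea formula for $BV$ functions \cite[Theorem~3.40]{AFP00}, which applied to $u$ (on bounded open sets, then localized) gives, for every Borel set $B$,
\[
|Du|(B)=\int_{-\infty}^{\infty}|D\1_{\{u>t\}}|(B)\,dt.
\]
By the description of the level sets, the right-hand side equals $|D\1_{A\cup B}|(B)+|D\1_{A\cap B}|(B)$. Since $|Du|$ is locally finite, this already yields $A\cup B,A\cap B\in\FP_{\loc}$ (the coarea statement also guarantees $\per(\{u>t\})<\infty$ for a.e.\ $t$, and this perimeter is constant on $[0,1)$, resp.\ $[1,2)$, equal to $\per(A\cup B)$, resp.\ $\per(A\cap B)$). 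Combining the identity with the elementary bound $|Du|=|D\1_A+D\1_B|\le|D\1_A|+|D\1_B|$ for total variations of $\rd$-valued measures gives the asserted inequality of measures. Integrating over $\rd$ produces \eqref{P1-1}; and if in addition $A,B\in\FP$ then $u\in\BV$, so the two perimeters on the left are finite and $A\cup B,A\cap B\in\FP$.

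For the last assertion I would first discard the nonnegative term $|D\1_{A\cap B}|$ in the pairwise inequality, obtaining $|D\1_{A\cup B}|\le|D\1_A|+|D\1_B|$, and then induct on the partial unions $C_n:=\bigcup_{i=1}^nA_i$ to get $|D\1_{C_n}|(\cdot)\le\sum_{i=1}^n|D\1_{A_i}|(\cdot)$. In the countable case $\1_{C_n}\uparrow\1_{\bigcup_iA_i}$, hence $\1_{C_n}\to\1_{\bigcup_iA_i}$ in $L^1_{\loc}$ by dominated convergence on bounded sets; lower semicontinuity of the variation on open sets \cite[Remark~3.5]{AFP00} then gives $|D\1_{\bigcup_iA_i}|(V)\le\liminf_n|D\1_{C_n}|(V)\le\sum_{i=1}^\infty|D\1_{A_i}|(V)$ for every open $V$, which extends to all Borel sets by outer regularity.

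I do not expect a real obstacle once $u=\1_A+\1_B$ is written down: the coarea formula carries the whole argument, the only checks being the routine identification of the superlevel sets of $u$ and the fact that $u$ inherits (locally) bounded variation from $A$ and $B$. If one prefers a more self-contained route, in the spirit of this paper's emphasis on smooth approximation, one can instead mollify, setting $u_\eps:=\1_A*\rho_\eps$ and $v_\eps:=\1_B*\rho_\eps$: then $\max(u_\eps,v_\eps)\to\1_{A\cup B}$ and $\min(u_\eps,v_\eps)\to\1_{A\cap B}$ in $L^1_{\loc}$, while $|\nabla\max(u_\eps,v_\eps)|+|\nabla\min(u_\eps,v_\eps)|=|\nabla u_\eps|+|\nabla v_\eps|$ almost everywhere, and combining lower semicontinuity with $\int_V|\nabla u_\eps|\,dx\le|D\1_A|(V_\eps)$ (where $V_\eps$ is the $\eps$-neighbourhood of $V$) plus regularity of the measures reproduces the same inequality of measures. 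On this route the only mildly delicate bookkeeping is the passage from open sets to arbitrary Borel sets.
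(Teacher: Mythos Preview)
Your argument is correct, but the route for the two-set inequality is genuinely different from the paper's. You derive the identity $|D(\1_A+\1_B)|=|D\1_{A\cup B}|+|D\1_{A\cap B}|$ from the coarea formula and then invoke the triangle inequality for total variations; this is clean and yields, as a byproduct, an exact relation rather than just an inequality. The paper instead argues geometrically on measure-theoretic boundaries: from the inclusions $\partial^M(A\cup B)\cup\partial^M(A\cap B)\subset\partial^MA\cup\partial^MB$ and $\partial^M(A\cup B)\cap\partial^M(A\cap B)\subset\partial^MA\cap\partial^MB$ together with $|D\1_E|=\Haus\llc\partial^ME$, one obtains the measure inequality by inclusion--exclusion for $\Haus$. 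The advantage of the paper's route is that it uses only pointwise density considerations plus the De~Giorgi--Federer structure theorem, with no appeal to coarea; the advantage of yours is that it avoids the boundary structure theory entirely and stays at the level of total variation. For the countable-union part your proof is essentially the same as the paper's: induction on partial unions followed by lower semicontinuity of the variation under $L^1_{\loc}$ convergence. Two minor remarks: watch the notational clash in using $B$ both for the second set and for a generic Borel test set; and your passage ``on open sets, then by outer regularity'' is fine because all measures in sight are locally finite Borel measures on $\rd$, hence outer regular.
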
 

\begin{proof}
Inequality \eqref{P1-1} was shown in \cite[Proposition~1]{ACMM99} by the following argument. From the definition of the measure-theoretic boundary we get the inclusions
\begin{eqnarray*}
\partial^M(A\cup B)\cup\partial^M(A\cap B)&\subset& \partial^MA\cup\partial^MB,\\
\partial^M(A\cup B)\cap\partial^M(A\cap B)&\subset& \partial^MA\cap\partial^MB.
\end{eqnarray*}
Since for any $E\subset\rd$ measurable, $|D\1_E|(\cdot)=\Ha^{d-1}(\partial^ME\cap\cdot)$, the first inequality follows, and \eqref{P1-1} is a consequence.
The second inequality, \eqref{P1-2}, can also be found in \cite{ACMM99}, it follows from \eqref{P1-1} and from the lower semicontinuity of the perimeter with respect to the $L^1_{\loc}$ convergence.
\end{proof}

The set operations of union and intersection are of principal importance in stochastic geometry. In the Fell topology, the mapping $(A,B)\mapsto A\cup B$ is continuous \cite[Corollary~1 of Theorem~1.2.2]{Ma75} and $(A,B)\mapsto A\cap B$ is upper semicontinuous \cite[Corollary~1 of Proposition~1.2.4]{Ma75}.

\begin{proposition}  \label{meas}
The mappings
\begin{eqnarray*}
\cap &:&(A,B)\mapsto A\cap B,\\
\cup &:&(A,B)\mapsto A\cup B
\end{eqnarray*}
are measurable from $\FP\times\FP$ to $\FP$ and from $\FP_{\loc}\times\FP_{\loc}$ to $\FP_{\loc}$.
\end{proposition}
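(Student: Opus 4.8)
\emph{Proof idea.}\quad Continuity of $\cap$ and $\cup$ in the (locally) strict topology is not available — recall that even in the Fell topology only $\cup$ is continuous — so the plan is not to prove continuity but to observe that the feature of the strict topology which destroys continuity, namely the convergence of the variation, does not enlarge the Borel structure. Concretely, I would show that both operations are continuous when $\FP$ is viewed inside $L^1$ (and $\FP_{\loc}$ inside $L^1_{\loc}$) with the respective Lebesgue topology, and then invoke the fact, essentially the content of the Remark above, that the Borel $\sigma$-algebra associated with (locally) strict convergence coincides with the trace of the Borel $\sigma$-algebra of $L^1$ (of $L^1_{\loc}$).

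\emph{Step 1: continuity in $L^1$ and $L^1_{\loc}$.}\quad Writing $\1_{A\cap B}=\1_A\wedge\1_B$ and $\1_{A\cup B}=\1_A\vee\1_B$ and using the elementary pointwise bounds $|a\wedge b-a'\wedge b'|\le|a-a'|+|b-b'|$ and $|a\vee b-a'\vee b'|\le|a-a'|+|b-b'|$, the maps $(f,g)\mapsto f\wedge g$ and $(f,g)\mapsto f\vee g$ are Lipschitz, hence continuous, from $L^1\times L^1$ to $L^1$ and, after testing against each compact set, from $L^1_{\loc}\times L^1_{\loc}$ to $L^1_{\loc}$. By Proposition~\ref{P-union} their restrictions carry $\FP\times\FP$ into $\FP$ and $\FP_{\loc}\times\FP_{\loc}$ into $\FP_{\loc}$.

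\emph{Step 2: the Borel structures agree.}\quad By the argument of the Remark, $\FP$ is a Borel ($F_\sigma$) subset of $L^1$ and $\cB(\FP)=\cB(L^1)\cap\FP$. For the local case I would add one observation: the map $f\mapsto|Df|$ is Borel measurable from $\BV_{\loc}$ with the $L^1_{\loc}$-topology to $\cM$ with the vague topology, because for each bounded open $U$ the function $f\mapsto|Df|(U)$ is a supremum of the $L^1_{\loc}$-continuous functions $f\mapsto\int f\,\Div\varphi\,dx$ over $\varphi\in C^1_c(U,\rd)$ with $\sup|\varphi|\le1$, hence $L^1_{\loc}$-lower semicontinuous, while $\cB(\cM)$ is generated by the evaluations $\mu\mapsto\mu(U)$, $U$ bounded open (layer-cake). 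Since locally strict convergence is exactly convergence in $L^1_{\loc}$ together with vague convergence of the measures $|Df_i|$, this yields $\cB(\FP_{\loc})=\cB(L^1_{\loc})\cap\FP_{\loc}$ as well. Finally, all spaces in sight are separable metric — e.g. $f\mapsto(f,Vf)$ embeds $\BV$ isometrically into $L^1\times\R$, and $f\mapsto(f,|Df|)$ embeds $\BV_{\loc}$ into the separable space $L^1_{\loc}\times\cM$ — so the Borel $\sigma$-algebra of a product equals the product of the Borel $\sigma$-algebras, and in particular $\cB(\FP\times\FP)=\cB(L^1\times L^1)\cap(\FP\times\FP)$, with the analogous identity for $\FP_{\loc}\times\FP_{\loc}$.

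\emph{Step 3: conclusion.}\quad The $L^1$-continuous map $(f,g)\mapsto f\wedge g$ is $\cB(L^1\times L^1)$-to-$\cB(L^1)$ measurable, it sends the Borel set $\FP\times\FP$ into the Borel set $\FP$, and by Step~2 its restriction is therefore $\cB(\FP\times\FP)$-to-$\cB(\FP)$ measurable; the same applies to $(f,g)\mapsto f\vee g$ and verbatim to the local versions. I expect the only point requiring genuine care to be the local half of Step~2 — the coincidence of the locally strict Borel structure with the $L^1_{\loc}$ one — whereas Step~1 and the product-$\sigma$-algebra bookkeeping are routine.
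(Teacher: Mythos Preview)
Your proof is correct and rests on the same two ingredients as the paper's --- $L^1$-continuity of $\cup$ and $\cap$ (your Step~1 is the paper's first observation, phrased via $\wedge,\vee$ rather than symmetric-difference inclusions) and lower semicontinuity of the variation --- but the packaging differs. The paper argues directly that $(A,B)\mapsto\per(A\cup B)$ and $(A,B)\mapsto\per(A\cap B)$ are lower semicontinuous on $\FP\times\FP$, hence the four-variable map $(A,B,A',B')\mapsto d_s(A\cup B,A'\cup B')$ is Borel, and reads off measurability of $\cup$ from that; the $\FP_{\loc}$ case is explicitly left to the reader. Your route via the identification $\cB(\FP)=\cB(L^1)\cap\FP$ (drawn from the Remark) is more structural and, once established, reusable: any $L^1$-continuous map landing in $\FP$ is automatically Borel for the strict structure, so nothing special about $\cup$ or $\cap$ is needed beyond Step~1. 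You also fill in the local case that the paper omits, via the $L^1_{\loc}$-Borel measurability of $f\mapsto|Df|$ into $\cM$; that argument is sound and is exactly the ``further standard consideration'' the paper alludes to.
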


\begin{proof}
First, note that the mappings $\cup$ and $\cap$ are continuous from $L^1\times L^1$ to $L^1$ (and from $L^1_{\loc}\times L^1_{\loc}$ to $L^1_{\loc}$). This follows from the inclusions
\begin{eqnarray*}
(A\cup B)\Delta(A'\cup B')&\subset&(A\Delta A')\cup(B\Delta B'),\\
(A\cap B)\Delta(A'\cap B')&\subset&(A\Delta A')\cup(B\Delta B').
\end{eqnarray*}
Further, we observe that the mappings
$$(A,B)\mapsto \per(A\cup B),\quad (A,B)\mapsto\per(A\cap B)$$
are lower semicontinuous on $\FP\times \FP$. This follows from the first observation and from the lower semicontinuity of variation with respect to the strict topology (\cite[Remark~3.5]{AFP00}).
Hence, the mappings
$$(A,B,A',B')\mapsto d_s(A\cup B,A'\cup B'),\quad d_s(A\cap B,A'\cap B')$$
are measurable on $\FP^4$, where $d_s$ is the metric \eqref{sc} inducing strict convergence. Hence, the measurability of $\cup$ and $\cap$ on $\FP\times\FP$ follows. The case of $\FP_{\loc}\times\FP_{\loc}$ needs some further standard consideration which will be left to the reader.
\end{proof}

\section{Random sets of finite perimeter and $\FP$-processes}

Let $(\Omega,\Sigma,\Pr)$ be a standard probability space. A {\it random set with} ({\it locally}) {\it finite perimeter} is a measurable mapping
$$X:(\Omega,\Sigma,\Pr)\to (\FP,\cB(\FP))\, (\FP_{\loc},\cB(\FP_{\loc})), \text{ respectively}.$$
The probability measure $\Pr X^{-1}$ on $(\FP,\cB(\FP))\, (\FP_{\loc},\cB(\FP_{\loc}))$, \text{ respectively}, is called the {\it distribution} of $X$.

A random set $X$ with locally finite perimeter is said to be {\it stationary} if its distribution $\Pr X^{-1}$ is translation invariant (i.e., $\Pr X^{-1}=\Pr (X+z)^{-1}$ for all $z\in\rd$). In such a case, the measure $\E |D\1_X|$ is translation invariant, and it is, hence, a multiple of the Lebesgue measure whenever it is locally finite. We can thus define the {\it specific perimeter} $\bar{P}$ of $X$ through
$$\E |D\1_X|(B) =\bar{P}(X) \Ha^d(B),$$
where $B\in\cB^d$ is any set of finite positive Lebesgue measure. (Note that $\bar{P}(X)$ may take the value $\infty$.)

It is clear that, $X$ being a random set with locally finite perimeter, $\bC_{d-1}(X,\cdot)$ is a random measure and if $X$ is stationary then we obtain by standard methods that $\E\bC_{d-1}(X,\cdot)$ factorizes in a product of the Lebesgue measure with a measure on $\sph$. Assuming that $\bar{P}(X)<\infty$, we can thus define the {\it specific area measure} $\bar{S}_{d-1}(X,\cdot)$ through
$$\E\bC_{d-1}(X,B\times\cdot)=\Ha^d(B) \bar{S}_{d-1}(X,\cdot),$$
where, again, $B\in\cB^d$ is any set of finite positive Lebesgue measure. $\bar{S}_{d-1}(X,\cdot)$ is a finite Borel measure of the unit sphere and its total measure is $\bar{S}_{d-1}(X,\sph)=\bar{P}(X)$.

In the following, $\stackrel{d}{\to}$ denotes convergence of random variables in distribution. Further, if $X$ is a random set with locally finite perimeter, we call a Borel set $B\subset\rd$ {\it $X$-continuous} if $\Ha^d(\partial B)=0$ and $|D\1_X|(\partial B)=0$ almost surely.
 
\begin{proposition}
Let $X_i,X$ be random sets with locally finite perimeter and assume that $X_i\stackrel{d}{\to}X$. Then we have:
\begin{enumerate}
\item[{\rm (i)}] $\Ha^d(X_i\cap K)\stackrel{d}{\to}\Ha^d(X\cap A)$ for any $K\subset\rd$ compact;
\item[{\rm (ii)}] $\int g(x)\, |D\1_{X_i}|(dx)\stackrel{d}{\to}\int g(x)\, |D\1_{X}|(dx)$ for any $g\in C^1_c(\rd)$;
\item[{\rm (iii)}] $|D\1_{X_i}|(K)\stackrel{d}{\to}|D\1_{X}|(K)$ for any $X$-continuous compact set $K\subset\rd$.
\end{enumerate}
Further, $\bC_{d-1}(X_i,\cdot)$ and $\bC_{d-1}(X,\cdot)$ are random locally bounded measures and we have
\begin{enumerate}
\item[{\rm (iv)}] $\bC_{d-1}(X_i,\cdot)\stackrel{d}{\to}\bC_{d-1}(X,\cdot)$, $S_{d-1}(A,\cdot)\stackrel{d}{\to}S_{d-1}(A,\cdot)$.
\end{enumerate}
\end{proposition}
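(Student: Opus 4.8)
The plan is to obtain all four assertions from the continuous mapping theorem for convergence in distribution on metrizable spaces: if $h$ is a Borel map whose set $D_h$ of points of discontinuity satisfies $\Pr(X\in D_h)=0$, then $X_i\stackrel{d}{\to}X$ implies $h(X_i)\stackrel{d}{\to}h(X)$. Here $\FP_{\loc}$ carries the metrizable topology of locally strict convergence, which is the conjunction of $L^1_{\loc}$-convergence with vague convergence $|D\1_{A_i}|\stackrel{v}{\to}|D\1_A|$ of the variation measures, and the target spaces $\R$, $\cM$, $\cM_b$ are Polish; so all that is needed is, functional by functional, to identify a topology rendering it continuous, or else a sufficiently small discontinuity set. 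The requisite Borel measurability of the maps is immediate from continuity in (i) and (ii) and is Proposition~\ref{C_cont} in (iv); for (iii) it follows by writing $|D\1_A|(K)=\lim_n|D\1_A|(U_n)$ with $U_n:=\{x:\dist(x,K)<1/n\}$, each $A\mapsto|D\1_A|(U_n)$ being lower semicontinuous under vague convergence. In particular $\bC_{d-1}(X_i,\cdot)$ and $\bC_{d-1}(X,\cdot)$ are genuine random elements of $\cM$.

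For (i) and (ii) the functionals are globally continuous. Since $\Ha^d(A\cap K)=\int_K\1_A\,d\Ha^d$ and $\bigl|\int_K\1_{A'}-\int_K\1_{A''}\bigr|\le\int_K|\1_{A'}-\1_{A''}|$, the map $A\mapsto\Ha^d(A\cap K)$ is $1$-Lipschitz for the $L^1_{\loc}$-metric, hence continuous on $\FP_{\loc}$ (no condition on $K$ beyond compactness is needed), and the continuous mapping theorem gives (i). Locally strict convergence includes $\int\varphi\,d|D\1_{A_i}|\to\int\varphi\,d|D\1_A|$ for every $\varphi\in C_c(\rd)$, and $C^1_c(\rd)\subset C_c(\rd)$; so $A\mapsto\int g\,d|D\1_A|$ is continuous on $\FP_{\loc}$ and the continuous mapping theorem gives (ii).

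For (iii) the functional $h(A):=|D\1_A|(K)$ is no longer continuous, but I claim $D_h\subset\{A:|D\1_A|(\partial K)>0\}$. Indeed, if $|D\1_A|(\partial K)=0$ and $A_i\to A$ locally strictly, then $|D\1_{A_i}|\stackrel{v}{\to}|D\1_A|$, and the Portmanteau theorem for vague convergence, applied to the \emph{compact} set $K$ with $|D\1_A|$-null boundary, gives $|D\1_{A_i}|(K)\to|D\1_A|(K)$. Since $K$ is $X$-continuous we have $\Pr(|D\1_X|(\partial K)>0)=0$ (only this half of the definition of $X$-continuity is used), hence $\Pr(X\in D_h)=0$, and the continuous mapping theorem yields (iii).

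For (iv), Proposition~\ref{C_cont} gives continuity of $A\mapsto\bC_{d-1}(A,\cdot)$ from $\FP_{\loc}$ to $\cM$, so $\bC_{d-1}(X_i,\cdot)\stackrel{d}{\to}\bC_{d-1}(X,\cdot)$ is immediate. For the area measure one composes with the marginalization $\mu\mapsto\mu(\rd\times\cdot)$, which is continuous from $\cM_b(\rd\times\sph)$ to $\cM_b(\sph)$ for the weak topology (test against $(x,v)\mapsto h(v)$, $h\in C(\sph)$) but \emph{not} under vague convergence alone; accordingly the $S_{d-1}$ assertion is to be read in the bounded setting $X_i,X\in\FP$ with $X_i\stackrel{d}{\to}X$ in $\FP$, where Proposition~\ref{C_cont} gives continuity $\FP\to\cM_b$, and composing with the marginalization and the continuous mapping theorem finishes. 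I expect (iii) and this last step to be the only genuine obstacles: (iii) because it needs the ``continuity off a $\Pr_X$-null set'' form of the theorem together with the Portmanteau statement for \emph{vague} (not weak) convergence; and the $S_{d-1}$ part of (iv) because the passage from $\bC_{d-1}$ to $S_{d-1}$ truly requires boundedness, mass of $\bC_{d-1}(A_i,\cdot)$ being otherwise free to escape to infinity in the spatial variable. Everything else is bookkeeping about which of $L^1_{\loc}$-, vague, or weak convergence makes a given functional continuous.
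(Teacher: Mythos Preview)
Your proof is correct and follows essentially the same route as the paper: both argue via the continuous mapping theorem, checking that each functional is continuous on $\FP_{\loc}$ (for (i), (ii), (iv)) or continuous off a set of $\Pr_X$-measure zero (for (iii)). Your treatment is in fact more explicit than the paper's on several points---you spell out the measurability of $A\mapsto|D\1_A|(K)$ via a monotone limit, you give the Portmanteau argument for (iii) rather than merely asserting continuity on $\{A:|D\1_A|(\partial K)=0\}$, and you correctly observe that the $S_{d-1}$ assertion in (iv) only makes sense in the bounded setting $\FP\to\cM_b$, a point the paper glosses over; your remark that only the perimeter half of $X$-continuity is actually used in (iii) is also accurate.
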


\begin{proof}
The assumption $X_i\stackrel{d}{\to}X$ implies by definition that $H(X_i)\stackrel{d}{\to} H(X)$ for any continuous function $H$ on $\FP_{\loc}$. Since the function $A\mapsto\Ha^d(A\cap K)$ is continuous on $\FP_{\loc}$ for any compact $K\subset\rd$, we obtain (i). To verify (ii), we use the function $A\mapsto \int g(x)\, |D\1_A|(dx)$, which is again continuous on $\FP_{\loc}$ for any $g\in C^1_c(\rd)$. In order to show (iii), we observe that $A\mapsto |D\1_A|(A\cap K)$ is continuous on the set $\{A\in\FP_{\loc}:\, |D\1_A|(\partial K)=0\}$ whenever $K$ is a compact set with $\Ha^d(\partial K)=0$. Finally, (iv) follows directly from Proposition~\ref{C_cont}.
\end{proof}

The Crofton formula for perimeter gives us the following classical stereological relation between specific perimeter of a random set and of its flat sections.

\begin{theorem} \label{T_Croft}
Let $X$ be a stationary random set of locally finite perimeter and with finite specific perimeter. Then, for any $1\leq j\leq d$ and for any $B\in\cB^d$,
$$\int_{\cA(d,j)}\E |D\1_{X\cap E}|(B\cap E)\, \mu_j^d(dE)=c_{d,j}\E|D\1_X|(B).$$
In particular,
$$\int_{\cA(d,j)}\bar{P}^{(E)}(X\cap E)\, \mu_j^d(dE)=c_{d,j}\bar{P}(X).$$
\end{theorem}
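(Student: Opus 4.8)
The plan is to combine the deterministic Crofton formula (Theorem~\ref{T_Crof}) with Fubini's theorem and the defining property of the specific perimeter. First I would fix $B\in\cB^d$ and, for $\Pr$-almost every $\omega\in\Omega$, apply Theorem~\ref{T_Crof} to the function $f=\1_{X(\omega)}$, which has locally finite perimeter; although Theorem~\ref{T_Crof} is stated for $f$ of (globally) bounded variation, one reduces to that case by intersecting with a large ball containing $B$, or by a routine monotone-class argument on $B$. This gives, for a.e.\ $\omega$,
$$\int_{\cA(d,j)}|D^{(E)}(\1_{X(\omega)}|E)|(B\cap E)\, \mu_j^d(dE)=c_{d,j}\,|D\1_{X(\omega)}|(B),$$
and note $|D^{(E)}(\1_{X(\omega)}|E)|(B\cap E)=|D\1_{X(\omega)\cap E}|(B\cap E)$, the perimeter of the slice inside $E$.

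Next I would take expectations of both sides. The right-hand side becomes $c_{d,j}\,\E|D\1_X|(B)$, which by stationarity and the definition of $\bar P(X)$ equals $c_{d,j}\,\bar P(X)\,\Ha^d(B)$, finite by hypothesis. For the left-hand side one needs to interchange $\E$ and $\int_{\cA(d,j)}\cdot\,\mu_j^d(dE)$; this is legitimate by Tonelli's theorem once one checks that the integrand $(\omega,E)\mapsto|D\1_{X(\omega)\cap E}|(B\cap E)$ is jointly measurable and nonnegative. Measurability follows from the measurability of $X$ into $\FP_{\loc}$, the measurability of the slicing map (Lemma~\ref{L11} shows the slices behave well, and the map $A\mapsto A\cap E$ is measurable by arguments as in Proposition~\ref{meas}), and the lower semicontinuity of perimeter. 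The interchange yields
$$\int_{\cA(d,j)}\E|D\1_{X\cap E}|(B\cap E)\, \mu_j^d(dE)=c_{d,j}\,\E|D\1_X|(B),$$
which is the first displayed identity.

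For the second identity I would specialize $B$ to a set of finite positive Lebesgue measure and use that, $X$ being stationary, for $\mu_j^d$-almost every $E$ the slice process $X\cap E$ is a stationary random set of locally finite perimeter inside the $j$-flat $E$, so its specific perimeter $\bar P^{(E)}(X\cap E)$ is well-defined through $\E|D\1_{X\cap E}|(B\cap E)=\bar P^{(E)}(X\cap E)\,\Ha^j(B\cap E)$. Dividing the first identity by the appropriate normalization and using the disintegration of $\mu_j^d$ into a direction part and a translation part then gives the stated formula; here the finiteness of $\bar P(X)$ guarantees that $\bar P^{(E)}(X\cap E)$ is finite for $\mu_j^d$-a.e.\ $E$. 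The main obstacle I anticipate is the joint measurability of the slice perimeter $(\omega,E)\mapsto|D\1_{X(\omega)\cap E}|(B\cap E)$ together with the passage from $\BV$ to $\BV_{\loc}$ in invoking Theorem~\ref{T_Crof}; once those technical points are settled, the rest is Tonelli plus the definitions.
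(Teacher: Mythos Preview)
Your proposal is correct and follows essentially the same route as the paper: apply the deterministic Crofton formula of Theorem~\ref{T_Crof} realization-wise, take expectations, and interchange $\E$ with $\int_{\cA(d,j)}$ via Tonelli. The paper's proof is in fact just this one line, so your additional care about joint measurability and the $\BV$ versus $\BV_{\loc}$ reduction only makes the argument more complete.
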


\begin{proof}
Follows from Theorem~\ref{T_Crof} applying the expectation on both sides and Tonelli's theorem for exchanging integral with expectation.
\end{proof}

\section{Random $\FP$-processes}

A  random $\FP$-process will be a point process on $\FP$ in the sense of Mecke \cite{Me79}, see also Ripley \cite{Ri76}. In particular, we consider the triple
$$(\FP,\cB(\FP),\cB_0),$$
called {\it bounded space} in \cite{Me79}, where $\cB_0$ is the subfamily of $\cB(\FP)$ consisting of those $U\in\cB(\FP)$ for which there exist a compact set $K\subset\rd$ such that $|D\1_A|(K)>0$ for all $A\in U$. Note that the measurable space $(\FP,\cB(\FP))$ is {\it full} in the sense on \cite{Me79} since each standard Borel space is full (cf.\ \cite[Theorem~1]{Me79}), and $\cB_0$ defined above clearly satisfies the requirements from \cite{Me79} since the sets 
$$U_n=\{ A\in\FP:\, |D\1_A|([-n,n]^d)>0\}\in\cB_0$$ 
cover the whole space $\FP$ and $\cB_0=\bigcup_n\{ U:\, U\in\cB(\FP),\, U\subset U_n\}$.

\begin{definition} \rm
A  {\it random $\FP$-process} is a measurable mapping
$$\Phi: (\Omega,\Sigma,\Pr)\to (\cN^{\#}_{\FP},{\frak N}^{\#}_{\FP}),$$
where $\cN^{\#}_{\FP}$ is the set of all integer-valued (nonnegative) Borel measures on $\FP$ that are finite on $\cB_0$, and ${\frak N}^{\#}_{\FP}$ is the smallest $\sigma$-algebra on $\cN^{\#}_{\FP}$ such that all mappings $\nu\mapsto\nu(U)$ are measurable, $U\in\cB_0$.
\end{definition}

The approach of Ripley and Mecke does not use any particular topology (metric) on $\FP$. Nevertheless, by \cite[Theorem~2]{Ri76}, there exists a complete metric on $\FP$ making it a locally compact space with the same Borel $\sigma$-field, $\cB(\FP)$, and such that $\cB_0$ agrees with both relatively compact sets as well as metrically bounded sets. Thus, the theory of \cite{DVJ08} can be applied. In particular, we can define the {\it weak$^{\#}$} (weak-hash) convergence on $\cN^{\#}_{\FP}$ by $\mu_i\to\mu$ weakly$^{\#}$ if $\int g\, d\mu_i\to\int g\, d\mu$ for any bounded continuous function $g$ on $\FP$ with support in $\cB_0$, and we get that ${\frak N}^{\#}_{\FP}$ agrees with the Borel $\sigma$-field of the weak$^{\#}$ topology (cf.\ \cite[Proposition~9.1.IV]{DVJ08}).

A point process $\Phi$ on $\BV$ is said to be {\it stationary} if its distribution is invariant with respect to the shift operation in $\rd$. A stationary point process $\Phi$ has an intensity $\gamma>0$ and {\it typical grain} $Z_0\in \FP$ (a random set with finite perimeter) and its mean characteristics are denoted as
\vspace{2mm}

\begin{tabular}{ll}
$\bar{V}_d(\Phi):=\E\Ha^d(Z_0)$ &mean volume,\\
$\bar{P}(\Phi):=\E P(Z_0)$ &mean perimeter,\\
$\bar{S}_{d-1}(\Phi,\cdot):=\E S_{d-1}(Z_0,\cdot)$&mean area measure of $\Phi$.
\end{tabular}

\begin{theorem}
Let $\Phi$ be a $\FP$-process.
Then, the union set
$$X:=\bigcup_{A\in\Phi}A$$
is a random set with locally finite perimeter. If $\Phi$ is stationary then, so is $X$.
\end{theorem}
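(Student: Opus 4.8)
The plan is to establish the result in two stages: first an almost-sure measurability/finiteness statement (showing $X$ lands in $\FP_{\loc}$ with probability one), then the measurability of the map $\omega\mapsto X(\omega)$ as a map into $(\FP_{\loc},\cB(\FP_{\loc}))$, and finally the stationarity claim, which is essentially immediate once the construction is set up equivariantly.

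\medskip

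\textbf{Step 1 (Local finiteness of the perimeter of the union).} Fix a compact set $K\subset\rd$, say a closed cube. Since $\Phi$ is an $\FP$-process, only countably many grains $A\in\Phi$ satisfy $|D\1_A|(K')>0$ for a slightly larger compact $K'\supset K$; enumerate the grains meeting $K'$ (in the sense of having Lebesgue density in $(0,1)$ at some point of $K'$, equivalently $\Ha^d(A\cap K')>0$ or $|D\1_A|(K')>0$) as $A_1,A_2,\dots$. The subtlety is that the indicator $\1_X$ restricted to a neighbourhood of $K$ agrees $\Ha^d$-a.e. with $\1_{\bigcup_n A_n}$, because grains disjoint from $K'$ up to measure zero do not affect $\1_X$ on $K$. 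Now I would apply the countable subadditivity inequality \eqref{P1-2} from Proposition~\ref{P-union}: for every $N$, the partial union $\bigcup_{n\le N}A_n$ lies in $\FP_{\loc}$ and $|D\1_{\bigcup_{n\le N}A_n}|(K)\le\sum_{n\le N}|D\1_{A_n}|(K)$. Letting $N\to\infty$, using lower semicontinuity of the variation under $L^1_{\loc}$-convergence (the partial unions converge to $X$ in $L^1_{\loc}$ near $K$ by dominated convergence) gives $|D\1_X|(K)\le\sum_n|D\1_{A_n}|(K')=\Phi\big(\{A:|D\1_A|(K')>0\}\big)$-weighted sum, which is finite because $\{A:|D\1_A|(K')>0\}\in\cB_0$ and $\Phi$ is finite on $\cB_0$. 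Covering $\rd$ by countably many such cubes shows $X\in\FP_{\loc}$ almost surely.

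\medskip

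\textbf{Step 2 (Measurability).} I would exhibit $X$ as an almost-sure limit, in the locally strict topology, of measurable approximants. Let $\Phi_N$ be the restriction of $\Phi$ to grains with $|D\1_A|([-N,N]^d)>0$ and enumerate them measurably (possible by standard point-process measurability, using that $\cN^\#_{\FP}$ carries the Mecke structure and $(\FP,\cB(\FP))$ is standard Borel). Then $X_N:=\bigcup_{A\in\Phi_N}A$ is a countable union handled by iterating Proposition~\ref{meas}, so each finite partial union is measurable into $\FP$; a countable union of $\FP$-sets with uniformly locally bounded perimeter is measurable into $\FP_{\loc}$ as a pointwise $L^1_{\loc}$-limit with bounded variations, and one checks $|D\1_{X_N}|\stackrel{v}{\to}|D\1_X|$ using Step~1's bound together with monotone convergence $\1_{X_N}\uparrow\1_X$ and the fact that for monotone sequences in $\FP_{\loc}$ locally strict convergence follows from $L^1_{\loc}$-convergence plus the perimeter bound (lower semicontinuity gives one inequality, subadditivity the reverse in the limit). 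Since each $X_N$ is measurable and $X_N\to X$ locally strictly a.s., $X$ is measurable into $(\FP_{\loc},\cB(\FP_{\loc}))$.

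\medskip

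\textbf{Step 3 (Stationarity).} If $\Phi$ is stationary, then for any $z\in\rd$ the shifted process $\Phi+z$ (shift acting on each grain) has the same distribution as $\Phi$. Because the union operation commutes with translation, $X+z=\bigcup_{A\in\Phi+z}A$, and hence $X+z$ is obtained from $\Phi+z$ by exactly the same measurable construction as $X$ from $\Phi$; equality of distributions of $\Phi$ and $\Phi+z$ therefore transfers to equality of distributions of $X$ and $X+z$.

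\medskip

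I expect the main obstacle to be \textbf{Step 1}, specifically the bookkeeping needed to pass from the countable subadditivity inequality \eqref{P1-2} (stated for sets of \emph{finite} perimeter) to the infinite union with only \emph{local} control, and ensuring that grains meeting $K$ only in a Lebesgue-null set are correctly discarded so that $\1_X$ genuinely coincides a.e. near $K$ with a countable union of the relevant grains. The interchange of the limit $N\to\infty$ with the variation measure requires care — one must combine lower semicontinuity (for $\liminf|D\1_{X_N}|(K)\ge|D\1_X|(K)$ one needs the other direction) with the uniform bound from $\cB_0$-finiteness — but this is the kind of argument that, once set up with monotone sequences, goes through cleanly. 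The measurability in Step~2 is routine given Proposition~\ref{meas} and the standard-Borel structure, and Step~3 is formal.
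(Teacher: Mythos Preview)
Your route differs from the paper's: rather than approximating by finite sub-unions, the paper proves that the deterministic map $\bigcup:\cN^{\#}_{\FP}\to\FP_{\loc}$ is measurable by showing it is \emph{continuous} into $L^1_{\loc}$ (with respect to weak$^{\#}$ convergence) and that $\phi\mapsto\int g\,d|D\1_{\bigcup\phi}|$ is lower semicontinuous for each $g\in C_c(\rd)$; these two facts suffice for Borel measurability into $\FP_{\loc}$. The paper also handles your Step~1 in one line, applying \eqref{P1-2} directly: $|D\1_{\bigcup\phi}|(K)\le\int|D\1_A|(K)\,\phi(dA)<\infty$, with no need to enumerate grains ``meeting $K'$''. (Your equivalence $\Ha^d(A\cap K')>0\Leftrightarrow|D\1_A|(K')>0$ is in any case false --- a grain containing $K'$ satisfies the first but not the second --- and infinitely many grains may meet $K'$ in positive volume.)

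There is a genuine gap in your Step~2. You claim $|D\1_{X_N}|\stackrel{v}{\to}|D\1_X|$ because ``for monotone sequences in $\FP_{\loc}$ locally strict convergence follows from $L^1_{\loc}$-convergence plus the perimeter bound (lower semicontinuity gives one inequality, subadditivity the reverse in the limit)''. This general principle is false: take $X_N=[0,a_N]\cup[\tfrac12,\tfrac12+a_N]\subset\R$ with $a_N\uparrow\tfrac12$; then $X_N\uparrow[0,1]$ in $L^1$ with $\per(X_N)=4$ uniformly bounded, yet $\per(X_N)\not\to\per([0,1])=2$. Subadditivity bounds $|D\1_{X_N}|$ by a sum of grain perimeters, not by $|D\1_X|$, so it cannot supply the reverse inequality. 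The vague convergence you want \emph{does} hold in your specific situation, but for a structural reason you do not invoke: a grain $A\notin\Phi_N$ has $|D\1_A|([-N,N]^d)=0$, hence $\1_A$ is a.e.\ constant on the connected cube $(-N,N)^d$, which forces $\1_X=\1_{X_N}$ a.e.\ on $(-N,N)^d$ unless some such grain contains the cube --- and once that grain enters $\Phi_M$ one gets $X_M\supset(-N,N)^d$ as well. Alternatively, you can bypass locally strict convergence entirely: by the standard-Borel remark in the paper, $\cB(\FP_{\loc})$ coincides with the trace of $\cB(L^1_{\loc})$, so measurability of $\omega\mapsto X(\omega)$ into $L^1_{\loc}$ (which you do establish) already suffices.
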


\begin{proof}
We have to show that the mapping
$$\bigcup: \phi\mapsto \bigcup\phi = \bigcup_{A\in\phi}A$$
is measurable from $\cN^{\#}_{\FP}$ to $\FP_{\loc}$. First, note that whenever $\phi\in\cN^{\#}_{\FP}$ then $\bigcup\phi$ is a Lebesgue measurable subset of $\rd$, hence, its indicator function belongs to $L^1_{\loc}$ and its perimeter is locally bounded, since by \eqref{P1-2} we have
$$|D\1_{\bigcup\phi}|(K)\leq\int |D\1_A|(K)\, \Phi(dA)<\infty,\quad K\subset\rd\text{ compact}$$
(note that the last integral is in fact only a finite sum since $\Phi$ is finite on $\cB_0$).

It remains to verify the measurability of $\bigcup$. This will be done in two steps.

\begin{step}
The mapping $\bigcup$ is continuous from $\cN^{\#}_{\FP}$ to $L^1_{\loc}$.
\end{step}
To see this, let $\phi_i,\phi\in\cN^{\#}_{\FP}$ be such that $\phi_i\to\phi$ weakly$^{\#}$. We shall show that
\begin{equation} \label{eee}
\Ha^d(\bigcup\phi_i\Delta\bigcup\phi_i)\to 0\text{ for any }K\subset\rd\text{ compact}.
\end{equation}
Fix a compact set $K\subset\rd$ and consider the function
$$h:A\mapsto\Ha^d(K\cap A\setminus\bigcup\phi),\quad A\in\FP.$$
The function $h$ is clearly continuous, bounded and has support in $\cB_0$, hence, $\int h\, d\phi_i\to\int h\, d\phi=0$. If follows that $\Ha^d(\bigcup\phi_i\setminus\bigcup\phi_i)\to 0$. We shall finish the proof of \eqref{eee} by contradiction. Assume that $\limsup_i\Ha^d(\bigcup\phi\setminus\bigcup\phi_i)>0$. Then, there exists a measurable set $B\subset K\cap\bigcup\phi$ and a subsequence $(i_k)$ such that $\Ha^d(B\cap \bigcup\phi_{i_k})=0$ for all $k$. But, since the function $h_B:A\mapsto\Ha^d(B\cap A)$ is continuous, bounded and with support in $\cB_0$, we have $\int h_B\, d\phi_{i_k}\to\int h_B\, d\phi\geq\Ha^d(B)$, a contradiction.

\begin{step}
For any $g\in C_c(\rd)$, the function $\phi\mapsto\int g\, |D\1_{\bigcup\phi}|$ is lower semicontinuous.
\end{step}
It is known that $f\mapsto\int g\, |Df|$ is lower semicontinuous on $L^1_{\loc}$, cf.\ \cite[Remark~3.5]{AFP00}. Composing this mapping with the smooth mapping $\phi\mapsto\bigcup\phi$ from Step~1, we obtain Step~2.

Taking into account the definition of locally strict convergence, it is clear that Steps 1 and 2 imply already the measurability of $\phi\mapsto\bigcup\phi$. The statement about stationarity is obvious and the proof is thus finished.
\end{proof}

In general, it is not possible to relate the specific perimeter (area measure) of the union set to the mean perimeter (area measure) of $\Phi$. This can be done in the case of a Poisson process.

\section{examples}
\subsection{Random approximations of convex bodies}
We shall work now in dimension two, though the same procedure can be applied in general dimension.

Let $K\subset\R^2$ be a fixed convex body with nonempty interior. Let, further, $L=\rho(\zeta+\Z^2)$ be a randomly shifted and rotated integer lattice (here $\rho$ is a uniform random rotation and $\zeta$ a uniform random point from $[0,1]^2$). We consider a rescaled lattice $tL$ with $t>0$ small and use it for approximating $K$.
\subsubsection*{Pixel approximation} The set
$$Z_1^t=\bigcup_{z\in tL\cap K} (z+t\rho[-\tfrac 12,\tfrac 12]^2)$$
is the union of pixels whose centres lie in $K$. It is a random closed set in the sense of Matheron as well as a random set with finite perimeter. $Z_1^t$ converges to $K$ (almost surely, as well as in distribution) in the Fell topology as $t\to 0$, but not in the space $\FP$ since, of course, the perimeter of $Z_1^t$ does not converge to that of $K$.
\subsubsection*{Convex hull of pixel centres}
Consider the set
$$Z_2^t=\conv(tL\cap K)$$
(the convex hull of lattice points lying in $K$). $Z_2^t$ is again a random closed set as well as random set with finite perimeter, and it is not difficult to show that its perimeter converges to that of $K$. Therefore, $Z_2^t\stackrel{d}{\to}K$ as $t\to 0$, both in the Fell topology and in the strict topology of $\FP$.

\subsection{Swiss cheese}
Let $(\xi_i)$ be a sequence of i.i.d.\ uniform random points from $[0,1]^d$, and $0<\ep<\frac 12$. Consider the set
$$Z^{\ep}=\bigcup_{i=1}^{\infty}U(\xi_i,\ep/2^i),$$
where $U(x,r)$ denotes the open Euclidean ball of centre $x$ and radius $r$. Applying Proposition~\ref{P-union}, we get that $P(Z^{\ep})\leq\pi\ep^2$, hence, $Z^{\ep}$ is a random set with finite perimeter. On the other hand, the closure of $Z^{\ep}$ covers the whole cube $[0,1]^d$ almost surely since the i.i.d.\ sequence $(\xi_i)$ s dense in $[0,1]^d$ almost surely. Therefore, there seems to be no way how to consider $Z^{\ep}$ as a random closed set.

The set
$$\Xi^{\ep}=[0,1]^d\setminus Z^{\ep}$$
is again a random set of finite perimeter (this should resemble the ``Swiss cheese'' if $d=3$, as a block of cheese with infinitely many small circular holes). Note that $\Xi^{\ep}$ is closed and can be considered as a random closed set in the sense of Matheron, as well. We have $\Xi^{\ep}\to[0,1]^d$ as $\ep\to 0$ both in the Fell topology as well as in the strict topology on $\FP$. Note that, however, the topologies of $\Xi^{\ep}$ and $[0,1]^d$ are completely different.

We know that the specific perimeter $\bar{P}(\Xi^{\ep})$ is finite, but it seems to be difficult to obtain the exact value.

\end{document}